\newcommand{\ignore}[1]{}
\newcommand{\hide}[1]{}
\DeclareMathOperator{\ad}{ad}
\DeclareMathOperator{\Hom}{Hom}
\newcommand{\Z}[0]{\mathbb Z}
\newtheorem{dummy}{Dummy}
\newtheorem{lemma}[dummy]{Lemma}
\newtheorem*{lemma*}{Lemma}
\newtheorem{theorem}[dummy]{Theorem}
\newtheorem{cor}[dummy]{Corollary}
\theoremstyle{definition}
\theoremstyle{remark}
\newtheorem{rem}[dummy]{Remark}
\begin{document}%___________________________________________________________

\bibliographystyle{alpha}%{line}
%\nocite{*}
\author{Sandro Mattarei}
\title[A sandwich in thin Lie algebras]{A sandwich in thin Lie algebras}
%\date{\today}

\begin{abstract}
A thin Lie algebra is a Lie algebra $L$, graded over the positive integers,
with its first homogeneous component $L_1$ of dimension two and generating $L$, and
such that each nonzero ideal of $L$ lies between consecutive terms of its lower central series.
All its homogeneous components have dimension one or two,
and the two-dimensional components are called diamonds.
We prove that if the next diamond past $L_1$ of an infinite-dimensional thin Lie algebra $L$ is $L_k$, with $k>5$,
then $[Lyy]=0$ for some nonzero element $y$ of $L_1$.
\end{abstract}

\subjclass[2010]{primary 17B50; secondary  17B70, 17B65}
\keywords{Modular Lie algebra, graded Lie algebra, thin Lie algebra, sandwich}
\maketitle

\section{Introduction}\label{sec:intro}%_______________________

A {\em thin} Lie algebra is
a graded Lie algebra $L=\bigoplus_{i=1}^{\infty}L_i$
with $\dim(L_1)=2$ and satisfying the following {\em covering property:}
for each $i$, each nonzero $z\in L_i$ satisfies $[zL_1]=L_{i+1}$.
This implies at once that homogeneous components of a thin Lie algebra are at most two-dimensional.
Those components of dimension two are called {\em diamonds,}
hence $L_1$ is a diamond, and if there are no other diamonds then $L$ is a graded Lie algebra of maximal class, see~\cite{CMN,CN}.
However, we adopt the convention of explicitly excluding graded Lie algebras of maximal class from the definition of thin Lie algebras.
We also require thin Lie algebras to be infinite-dimensional in this paper.

Thus, a thin Lie algebra must have at least one further diamond past $L_1$, and we let $L_k$ be the earliest (the {\em second} diamond).
It turns out that $k$ can only be one of $3$, $5$, $q$, or $2q-1$, where $q$ is a power of the characteristic $p$ when this is positive
(with only $3$ and $2q-1$ occurring in characteristic two).
This fact was proved in~\cite{AviJur}, but a revised proof is given in~\cite{Mat:chain_lengths}.
In particular, only $3$ and $5$ can occur in characteristic zero.
It is then tempting to call the cases where $k$ is $3$ or $5$ the {\em classical} cases,
following a usage in the theory of (finite-dimensional) simple modular Lie algebras, where the classical ones
are those which are analogues of the simple Lie algebras of characteristic zero
(including the exceptional ones,
as is customary in the modular theory).
The main conclusion of this paper, and its interpretation which we give in Section~\ref{sec:loop},
lends further weight to adopting such terminology.

Thin Lie algebras with $k$ equal to $3$ or $5$, subject to a further restriction $\dim(L_4)=1$ in the former case, and excluding some small characteristics,
were shown in~\cite{CMNS} to belong to (up to) three isomorphism types,
associated to $p$-adic Lie groups of the classical types $A_1$ and $A_2$ (see~\cite{Mat:thin-groups} for matrix realizations of those groups).
In contrast, the values $q$ and $2q-1$ for $k$ occur for two broad classes of thin Lie algebras, of which many were built from certain non-classical finite-dimensional
simple modular Lie algebras, and also to other thin Lie algebras obtained from graded Lie algebras of maximal class through various constructions.
General discussions of those two classes of thin Lie algebras can be found in~\cite{AviMat:A-Z} and~\cite{CaMa:Hamiltonian}, respectively.

Our main result is a general fact that was shown so far in {\em ad hoc} manners for various special instances of the class of thin Lie algebras under consideration.

\begin{theorem}\label{thm:sandwich}
Let $L$ be a thin Lie algebra with second diamond $L_k$, where $k>5$.
Then there is a nonzero element $y$ of $L_1$ such that $[Lyy]=0$.
\end{theorem}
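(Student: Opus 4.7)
The plan is to construct a specific $y\in L_1$ and verify $[Lyy]=0$ degree by degree. Since $k>5$, the components $L_2,\dots,L_{k-1}$ are all one-dimensional; let $e_j$ denote a nonzero element spanning $L_j$ in that range. Choose a basis $\{y_1,y_2\}$ of $L_1$ so that $e_2=[y_1,y_2]$ and $y_1$ lies in the one-dimensional kernel $C_2$ of $\ad(e_2)\colon L_1\to L_3$, that is, $[e_2,y_1]=0$. Set $y:=y_1$. Since $(\ad y)^2$ is a graded endomorphism of degree~$2$, it suffices to verify $(\ad y)^2L_i=0$ for every $i\ge 1$. For $i$ in the range $2\le i\le k-2$, writing $C_j\subseteq L_1$ for the one-dimensional kernel of $\ad(e_j)|_{L_1}$, the condition $(\ad y)^2L_i=0$ is equivalent to $y\in C_i$ or $y\in C_{i+1}$.

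The heart of the argument is an induction: assume $y\in C_i$ for some $i$ with $2\le i\le k-3$, and show that $y\in C_{i+1}$ or $y\in C_{i+2}$. The Jacobi identity yields $[e_{i+1},y]=-[e_i,e_2]$ (with $e_{i+1}=[e_i,y_2]$), so the task reduces to proving $[e_i,e_2]=0$. The key input is that $\ad(y)$ is a derivation annihilating both $e_i$ and $e_2$, whence $[[e_i,e_2],y]=0$; because $L_{i+2}$ is one-dimensional this either forces $[e_i,e_2]=0$ (giving $y\in C_{i+1}$) or else $y\in C_{i+2}$. In both cases the induction continues with gaps of size at most~$1$. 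The base cases $y\in C_2$ (by construction) and $y\in C_3$ (by direct Jacobi expansion) start the induction, and the final step forces $y\in C_{k-2}$, since $C_{k-1}=0$ (the diamond at $L_k$ makes $\ad(e_{k-1})\colon L_1\to L_k$ an isomorphism). This yields $(\ad y)^2L_i=0$ for all $i\le k-2$.

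The case $i=k-1$ follows from a parallel computation: $[e_{k-1},y]=-[e_{k-2},e_2]\in L_k$, and the derivation-kernel argument applied to $[e_{k-2},e_2]$ (using $y\in C_2\cap C_{k-2}$) gives $[[e_{k-1},y],y]=0$. For $i\ge k$ one must grapple with the two-dimensionality of the diamond $L_k$. A convenient basis of $L_k$ is $\tilde e_k=[e_{k-1},y]$ together with $e_k=[e_{k-1},y_2]$; the first basis vector is annihilated by $\ad(y)^2$ immediately, while for $e_k$ the computation reduces to showing the vanishing of $[\tilde e_k,e_2]\in L_{k+2}$. This is settled by another application of the derivation-kernel technique, and the conclusion propagates through all higher degrees by the same strategy. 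The central obstacle throughout is these post-diamond steps, which require a careful interaction between the centralizer machinery and the $2$-dimensional diamond structure; the hypothesis $k>5$ is essential, providing enough room in the initial segment for the inductive pattern to lock in before the diamond interferes.
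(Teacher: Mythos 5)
Your treatment of the one-dimensional components is sound and is essentially the paper's own route: your "derivation-kernel" dichotomy (either $[e_i,e_2]=0$, giving $y\in C_{i+1}$, or else $[e_i,e_2]$ spans the one-dimensional $L_{i+2}$ and is killed by $y$, giving $y\in C_{i+2}$) is Lemma~\ref{lemma:uxyy} and Corollaries~\ref{cor:y_centr} and~\ref{cor:vyy}, and it does prove $(\ad y)^2L_i=0$ whenever $L_{i+1}$ and $L_{i+2}$ are one-dimensional. The gap is exactly where you wave your hands: the diamond. You correctly reduce $(\ad y)^2e_k=0$ to $[\tilde e_k,e_2]=0$ (indeed $(\ad y)^2e_k=-2[\tilde e_k,e_2]$), but "another application of the derivation-kernel technique" only yields $[[\tilde e_k,e_2],y]=0$, i.e.\ that $[\tilde e_k,e_2]$ is an element of $L_{k+2}$ centralized by $y$. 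That was harmless before the diamond because either branch of the dichotomy gave what was needed; here only the vanishing branch does, and nothing in your argument excludes the other branch. The same unresolved dichotomy recurs at every later diamond, so "propagates through all higher degrees by the same strategy" is not justified either.

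A decisive symptom: your argument never actually uses $k>5$ in a computation (everything you do would run verbatim for $k=5$), yet the statement is false for $k=5$ --- the unique thin Lie algebra of Remark~\ref{rem:diamond_deg_5} has diamonds in degrees $\equiv\pm1\pmod 6$ and $[L_5yy]\neq 0$. The missing ingredient is the relation $[yxxxy]=0$ (in your notation, $y\in C_4$, not merely $y\in C_4$ or $y\in C_5$), whose proof genuinely needs $\dim(L_5)=1$ and the nonvanishing of a binomial coefficient via $0=[y[yxxxx]]=-4[yxxxyx]$. The paper then takes the \emph{minimal} $j$ with $[L_jyy]\neq 0$, shows $L_{j-3}$ is one-dimensional and centralized by $y$ (Lemma~\ref{lemma:sandwich_plus}), and applies $\ad$ of the identity $[yxxxy]=0$ to $t\in L_{j-3}$ to get $[vxyy]=3[vyxy]$; combined with your relation $[vxyy]=2[vyxy]$ from $0=[v[xyy]]$, the invertible $2\times2$ system forces $[vxyy]=[vyxy]=0$ and hence $[L_jyy]=0$. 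You need this second, independent linear relation at the diamond; the derivation property of $\ad y$ alone cannot supply it.
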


We use the left-normed convention
for iterated Lie products, hence $[abc]$ stands for $[[ab]c]$,
and so $[Lyy]=0$ is another way of saying $(\ad y)^2=0$.

According to a definition of Kostrikin, for $p\neq 2$ the conclusion of Theorem~\ref{thm:sandwich}
says that $y$ is a {\em sandwich element} (or simply a {\em sandwich}).
Kostrikin introduced sandwich elements in the context of the Burnside problem, see~\cite{Kostrikin:Burnside}.
Four decades later they played an important role in the classification theory of finite-dimensional simple modular Lie algebras,
especially for small characteristics (five and seven) in~\cite{PreStr:small_I}.
One remarkable property of sandwich elements, noted by Kostrikin and proved by Premet in~\cite{Premet:degeneration},
is that their presence in a finite-dimensional modular Lie algebra characterizes those simple Lie algebras which are not classical.

Although our thin Lie algebras are infinite-dimensional, a connection with finite-dimensional Lie algebras
comes via a {\em loop algebra} construction, which has been used many times to build thin Lie algebras
from finite-dimensional Lie algebras, simple or close to simple, with respect to certain cyclic gradings.
In Section~\ref{sec:loop} we discuss the relevance of the presence of a sandwich element in this context.

\section{A sandwich element in thin Lie algebras}\label{sec:sandwiches}%_______________________

A {\em sandwich element} of a Lie algebra $L$
is a nonzero element $c\in L$ such that $(\ad c)^2=0$ and $(\ad c)(\ad z)(\ad c)=0$ for all $z\in L$.
If the characteristic is different from two then the latter requirement (from which the name originates) is superfluous, as it
follows from the former due to
$0=[u[zcc]]=[uzcc]-2[uczc]+[uccz]$
for $u,z\in L$.

In the context of thin Lie algebras it turns out that a nonzero element $y$ of $C_{L_1}(L_2)$
satisfies $[Lyy]=0$ in most cases, as our Theorem~\ref{thm:sandwich} states,
and so $y$ is a sandwich element if the characteristic is not two.
This fact was crucial in the theory of graded Lie algebras of maximal class, making a theory of {\em constituents} possible,
although the sandwich point of view was not explicitly mentioned in~\cite{CMN} or~\cite{CN}.
In particular, \cite[Lemma~3.3]{CMN} can be restated as follows:
if $L$ is a graded Lie algebra of maximal class,
with the centralizer $C_{L_1}(L_2)$ spanned by an element $y$ as customary,
$[L_{i-1}y]=0$ and $[L_iy]\neq 0$ for some $i$, then $[L_{i+1}y]=0$,
whence $[L_iyy]=0$.
A remarkable consequence of this fact is $[Lyy]=0$, which means $(\ad y)^2=0$, and so $y$ is a sandwich element if the characteristic is not two.

Our main goal in this section is showing that such a nonzero element $y$ of $C_{L_1}(L_2)$
is frequently a sandwich element also in a thin Lie algebra $L$, under certain assumptions which we introduce and justify along the way.
If $L$ has second diamond $L_3$, then $C_{L_1}(L_2)=0$, hence we must take $\dim(L_3)=1$ as minimal assumption for our discussion, and then
$\dim\bigl(C_{L_1}(L_2)\bigr)=1$.
Taking then for $y$ a nonzero element of $C_{L_1}(L_2)$ and extending to a basis $x,y$ of $L_1$,
as we do without further mention in this section, we have $[xyy]=0$.

In the case of characteristic two we restrict ourselves to the following simple observation.

\begin{theorem}\label{thm:sandwich_even}
Let $L$ be a thin Lie algebra of characteristic two, with $\dim(L_3)=1$, and
let $y$ span $C_{L_1}(L_2)$.
Then $[Lyy]=0$.
\end{theorem}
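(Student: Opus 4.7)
The plan is to exploit the fact that in characteristic two the square of any derivation is again a derivation. Applied to $D=\ad y$, this will show that $(\ad y)^2$ is itself a derivation of $L$; since $L$ is generated as a Lie algebra by $L_1$, it will then suffice to verify that $(\ad y)^2$ vanishes on a basis of $L_1$.

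More precisely, from the Leibniz rule $D[ab]=[Da,b]+[a,Db]$ one gets
\[
D^2[ab]=[D^2a,b]+2[Da,Db]+[a,D^2b],
\]
and the middle term disappears in characteristic two, so $D^2$ is a derivation. Taking $D=\ad y$ and using that a derivation of $L$ that annihilates a generating set annihilates all of $L$, the whole problem reduces to showing that $(\ad y)^2$ kills $L_1$.

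This final check is immediate from the hypotheses. One has $(\ad y)^2y=0$ trivially. For the other basis vector $x$, in characteristic two $[xx]=[yy]=0$, so $L_2=[L_1L_1]$ is spanned by $[xy]$ (and is nonzero since $L$ is infinite-dimensional), and the assumption that $y$ centralizes $L_2$ gives $(\ad y)^2x=[[xy]y]=0$. Together with the reduction above, this yields $[Lyy]=0$.

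There is essentially no obstacle in this argument: the statement reduces in a few lines to the characteristic-two identity that squares of derivations are derivations, combined with the single bracket $[xyy]=0$ already noted in the paragraph preceding the theorem. It is precisely the surviving cross term $2[Da,Db]$ in odd characteristic that will make the general Theorem~\ref{thm:sandwich} substantially harder, forcing one to bring in finer structural information about the diamonds past $L_1$.
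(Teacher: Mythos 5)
Your argument is correct and is essentially the paper's own proof: in characteristic two $(\ad y)^2$ is a derivation, its kernel is therefore a subalgebra, and it contains the generators $x$ and $y$ because $[xyy]=0$ and $[yyy]=0$. The only difference is that you spell out why $D^2$ is a derivation and why $[xyy]=0$, facts the paper takes as read from the preceding paragraph.
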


\begin{proof}
Because $(\ad y)^2$ is a derivation of $L$, its kernel is a subalgebra.
However, because $[xyy]=0$ both generators
$x$ and $y$ of $L$ belong to the kernel, and hence that is the whole of $L$.
\end{proof}

We will show that Theorem~\ref{thm:sandwich_even} extends to arbitrary characteristic if we include a further hypothesis $\dim(L_5)=1$, as in Theorem~\ref{thm:sandwich}.
A justification for that hypothesis will emerge as we gradually work towards a proof.
We start with suitably extending the conclusion $[L_iyy]=0$ of~\cite[Lemma~3.3]{CMN}, which was about graded Lie algebras of maximal class,
to certain homogeneous components of any thin Lie algebra $L$ with $\dim(L_3)=1$.

\begin{lemma}\label{lemma:uxyy}
Let $L$ be a thin Lie algebra with $\dim(L_3)=1$, and let $y$ span $C_{L_1}(L_2)$.
Let $L_i$ be a homogeneous component with $[L_iy]\neq 0$, and suppose $L_{i-1}$ has a nonzero element $u$ with $[uy]=0$. %and $[uxy]\neq 0$.
Then $[L_iyy]=0$.
Moreover, $L_i$ and $L_{i+2}$ have dimension one.
\end{lemma}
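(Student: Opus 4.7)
The plan is to extract everything from the covering property together with a short Jacobi manipulation, using only the two inputs $[xyy]=0$ (which holds because $y\in C_{L_1}(L_2)$) and the hypothesis $[uy]=0$.

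First I would note that the covering property forces $L_i=[uL_1]$, and since $L_1=\langle x,y\rangle$ while $[uy]=0$, the component $L_i$ is spanned by the single element $v:=[ux]$. This element is nonzero because $L_i\neq 0$ (by hypothesis $[L_iy]\neq 0$), so $\dim L_i=1$. To prove $[L_iyy]=0$ it now suffices to show $[vyy]=[uxyy]=0$, and this follows from two applications of the Jacobi identity: first, since $[uy]=0$, one has $[uxy]=[u,[xy]]$; then, since $[xyy]=0$ and $[uy]=0$, one obtains $[[u,[xy]],y]=0$. This Jacobi step is really the whole substance of the argument, and while routine it is the main (indeed only) obstacle --- the rest of the statement is simply the covering property applied carefully.

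For the additional claim about $L_{i+2}$, the same argument repeats with $w:=[vy]\in L_{i+1}$ playing the role that $u$ played for $L_i$. Indeed $w$ is nonzero because $[L_iy]\neq 0$ and $L_i=\langle v\rangle$, and the step above gives $[wy]=0$, so the covering property forces $L_{i+2}=[wL_1]$ to be spanned by $[wx]$. Thus $\dim L_{i+2}\le 1$, and it is exactly one since $L_{i+2}\neq 0$ (again by covering), completing the plan.
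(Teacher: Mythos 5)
Your proof is correct and takes essentially the same approach as the paper's: where the paper expands $0=[u[xyy]]=[uxyy]-2[uyxy]+[uyyx]=[uxyy]$ in a single application of the generalized Jacobi identity, you reach $[uxyy]=0$ by two successive Jacobi steps, but the inputs ($[xyy]=0$ and $[uy]=0$) and the use of the covering property for both dimension claims are the same. No gaps.
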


\begin{proof}
Because of the covering property, $L_i$ is spanned by $[ux]$, and hence has dimension one.
From
\begin{equation}\label{eq:sandwich}
0=[u[xyy]]=[uxyy]-2[uyxy]+[uyyx]=[uxyy]
\end{equation}
it follows that $[L_iyy]=0$.
Because $[uxy]\neq 0$ by hypothesis, the covering property implies that
$L_{i+2}$ is spanned by $[uxyx]$, and hence has dimension one.
\end{proof}

The following immediate consequence of Lemma~\ref{lemma:uxyy} resembles the formulation of~\cite[Lemma~3.3]{CMN} more closely.

\begin{cor}\label{cor:y_centr}
Let $L$ be a thin Lie algebra with $\dim(L_3)=1$, and let $y$ span $C_{L_1}(L_2)$.
If $L_{i-1}$ has a nonzero element centralized by $y$, and $L_i$ has none, then $L_{i+1}$ has such an element as well.
\end{cor}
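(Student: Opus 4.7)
The plan is to invoke Lemma~\ref{lemma:uxyy} essentially verbatim, with the only work being the translation between the two formulations: the hypothesis of the corollary is phrased in terms of centralizers of $y$, whereas the lemma is phrased in terms of the condition $[L_iy]\neq 0$.

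First I would check that the hypotheses of Lemma~\ref{lemma:uxyy} are satisfied. The existence of a nonzero $u\in L_{i-1}$ with $[uy]=0$ is assumed. To verify $[L_iy]\neq 0$, note that $L_{i-1}\neq 0$ forces $L_i\neq 0$ by the covering property, so $L_i$ contains some nonzero element $w$; the assumption that no nonzero element of $L_i$ is centralized by $y$ then says precisely that $[wy]\neq 0$, hence $[L_iy]\neq 0$.

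Next I would apply Lemma~\ref{lemma:uxyy} to conclude $\dim L_i=1$ and $[L_iyy]=0$. Taking $w$ to span $L_i$, the element $[wy]$ sits in $L_{i+1}$, is nonzero (again by the assumption that no nonzero element of $L_i$ is centralized by $y$), and satisfies $[[wy]y]=[wyy]=0$. Thus $[wy]$ is the desired nonzero element of $L_{i+1}$ centralized by $y$.

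There is no substantive obstacle here: Lemma~\ref{lemma:uxyy} carries all the weight, and the corollary is simply its reformulation in language closer to \cite[Lemma~3.3]{CMN}. The only point requiring a moment of care is the initial verification that the lemma's hypothesis $[L_iy]\neq 0$ follows from the corollary's wording about centralizers in $L_i$.
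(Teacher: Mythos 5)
Your argument is correct and is exactly the intended one: the paper gives no separate proof, presenting the corollary as an immediate consequence of Lemma~\ref{lemma:uxyy}, and your write-up just makes explicit the translation between ``centralized by $y$'' and $[L_iy]\neq 0$ together with the observation that $[wy]$ is the required element of $L_{i+1}$.
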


Equivalently, in a thin Lie algebra with $\dim(L_3)=1$,
at least one of any two consecutive homogeneous components has a nonzero element centralized by $y$.
Corollary~\ref{cor:y_centr} is often useful as simple but weaker replacement for Lemma~\ref{lemma:uxyy}.
However, we need the full strength of Lemma~\ref{lemma:uxyy} in order to deduce the following consequence.

\begin{cor}\label{cor:vyy}
Let $L$ be a thin Lie algebra with $\dim(L_3)=1$,
and let $y$ span $C_{L_1}(L_2)$.
Suppose $\dim(L_j)=2$ for some $j>1$.
Then $\dim(L_{j-1})=1$ and $[L_{j-1}yy]=0$.
In particular, $L_j$ contains a nonzero element centralized by $y$.
\end{cor}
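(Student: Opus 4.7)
The plan is to verify the two hypotheses of Lemma~\ref{lemma:uxyy} at index $i=j-1$; if this works, the lemma directly delivers $\dim(L_{j-1})=1$ and $[L_{j-1}yy]=0$. The first hypothesis $[L_{j-1}y]\neq 0$ I would derive in a slightly stronger form from the covering property: for any nonzero $w\in L_{j-1}$ one has $[wL_1]=L_j$, so $[wx]$ and $[wy]$ span the two-dimensional space $L_j$, and in particular $[wy]\neq 0$. This actually shows the stronger fact that $L_{j-1}$ contains no nonzero element centralized by $y$.

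For the second hypothesis I would produce a nonzero $u\in L_{j-2}$ with $[uy]=0$. Here the equivalent reformulation of Corollary~\ref{cor:y_centr} pays off, namely that at least one of any two consecutive homogeneous components contains a nonzero $y$-centralized element; since the previous step excludes this in $L_{j-1}$, such an element must live in $L_{j-2}$. Lemma~\ref{lemma:uxyy} now applies at $i=j-1$ and gives $\dim(L_{j-1})=1$ together with $[L_{j-1}yy]=0$.

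For the concluding "in particular", the covering property applied to $u$ yields $[uL_1]=L_{j-1}$; combined with $[uy]=0$ this forces $L_{j-1}$ to be spanned by $[ux]$. A second application of covering to $[ux]$ expresses $L_j$ as the span of $[uxx]$ and $[uxy]$, and $\dim(L_j)=2$ forces these to be linearly independent and both nonzero. Since $[L_{j-1}yy]=0$ specializes to $[uxyy]=0$, the element $[uxy]$ is a nonzero member of $L_j$ centralized by $y$. I expect the only genuinely delicate step to be the opening observation that a diamond at position $j$ rules out $y$-centralized elements in $L_{j-1}$; once this is in hand, the Lemma and Corollary combine almost mechanically.
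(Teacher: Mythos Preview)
Your proof is correct and follows essentially the same route as the paper: use the covering property at a diamond $L_j$ to see that $L_{j-1}$ has no nonzero $y$-centralized element, invoke the reformulation of Corollary~\ref{cor:y_centr} to find such an element $u\in L_{j-2}$, and then apply Lemma~\ref{lemma:uxyy} with $i=j-1$. Your derivation of the final ``in particular'' via $[uxy]$ is also exactly what the paper does (with $v=[ux]$), just spelled out in a bit more detail.
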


\begin{proof}
Because of the covering property $L_{j-1}$ cannot contain any nonzero element centralized by $y$.
Hence $L_{j-2}$ does contain a nonzero element $u$ with $[uy]=0$.
Consequently, $v=[ux]$ spans $L_{j-1}$,
and $[vyy]=0$ according to Lemma~\ref{lemma:uxyy}.
Because $\dim(L_j)=2$ the element $[vy]$ of $L_j$ is nonzero, and is centralized by $y$ as desired.
\end{proof}

In particular, Corollary~\ref{cor:vyy} implies that two consecutive components in a thin Lie algebra $L$
with $\dim(L_3)=1$ cannot both be diamonds.
Thus, the {\em distance} between any two consecutive diamonds, by which we mean the difference of their degrees, is at least two.

An easily proved consequence of this fact is that $L$ remains thin under base field extensions.
%This can be easily proved as the covering property then amounts to certain linear maps being injective, which is unaffected by base field extensions.
By contrast, this property fails for a thin Lie algebra $L$ with two diamonds occurring as consecutive components, as we show in Remark~\ref{rem:field_extensions}.
In fact, the thin Lie algebras studied in~\cite{GMY,ACGMNO}
where all homogeneous components except $L_2$ are diamonds, require their base field to have a quadratic field extension.

\begin{rem}\label{rem:field_extensions}
Although it is inconsequential for this paper, we explain why in a thin Lie algebra $L$
over an algebraically closed field no two diamonds can occur as consecutive components.
For a graded Lie algebra $L=\bigoplus_{i=1}^{\infty}L_i$ we consider the linear maps
$\psi_j:L_j\to\Hom_F(L_1,L_{j+1})$ obtained by restriction from the adjoint representation.
When $L$ is thin with $L_j$ and $L_{j+1}$ both diamonds,
the covering property for the homogeneous component $L_j$ means that $\psi(L_j)$ is a two-dimensional subspace of $\Hom_F(L_1,L_{j+1})$
where every nonzero element is a surjective linear map.
Upon composing with a linear bijection $L_{j+1}\to L_1$ (whose choice is immaterial) we may view that as a two-dimensional subspace of
$\Hom_F(L_1,L_1)$ where every nonzero element has nonzero determinant.
In terms of the associated projective spaces, $P\bigl(\psi(L_j)\bigr)$ is then
a one-dimensional projective subspace of the three-dimensional projective space $P\bigl(\Hom_F(L_1,L_1)\bigr)$,
disjoint from the non-degenerate quadric given by the zeroes of the determinant map.
That cannot happen if the base field $F$ is algebraically closed.
\end{rem}

Corollary~\ref{cor:vyy} is relevant in assigning {\em types} to certain diamonds of a thin Lie algebra.
This was done in different ways according to whether the second diamond
occurs in degree $q$ or $2q-1$, in~\cite{CaMa:Nottingham} and~\cite{CaMa:thin}, respectively.
However, in both cases necessary conditions for a diamond $L_j$ to be assigned a type
were $\dim(L_{j-1})=1$ and $[L_{j-1}yy]=0$.
Corollary~\ref{cor:vyy} shows that those conditions hold in any thin Lie algebra with $\dim(L_3)=1$.

Now we use Lemma~\ref{lemma:uxyy} to prove that if $L$ is thin with $\dim(L_3)=1$, then
$[L_iyy]=0$ for all one-dimensional components $L_i$.

\begin{lemma}\label{lemma:sandwich}
Let $L$ be a thin Lie algebra with $\dim(L_3)=1$,
and let $y$ span $C_{L_1}(L_2)$.
Suppose $[L_jyy]\neq 0$ for some $j$.
Then $\dim(L_j)=2$. %\dim(L_{j+2})=2$.
\end{lemma}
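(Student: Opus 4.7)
The plan is to argue the contrapositive: assume $\dim(L_j)=1$ and deduce $[L_jyy]=0$. Note $j>1$ because $\dim(L_1)=2$, so $L_{j-1}$ is a genuine homogeneous component of $L$.

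If $[L_jy]=0$ the conclusion is immediate, so suppose $[L_jy]\neq 0$. Since $L_j$ is one-dimensional, this means that no nonzero element of $L_j$ is centralized by $y$. At this point I would invoke the equivalent form of Corollary~\ref{cor:y_centr} already recorded in the text, namely that at least one of any two consecutive components contains a nonzero element centralized by $y$; applied to the pair $L_{j-1},L_j$ this forces $L_{j-1}$ to contain a nonzero element $u$ with $[uy]=0$.

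But now the hypotheses of Lemma~\ref{lemma:uxyy} are satisfied at degree $i=j$: we have $[L_jy]\neq 0$, and a nonzero $u\in L_{j-1}$ with $[uy]=0$. The lemma then yields $[L_jyy]=0$, contradicting the assumption made at the start of the contrapositive.

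I do not expect a serious obstacle here; the work has been front-loaded into Lemma~\ref{lemma:uxyy} and its corollary, and the role of the present lemma is essentially to package them together. The only small point requiring attention is verifying that Corollary~\ref{cor:y_centr} applies in the form used (two consecutive components cannot both fail to contain a $y$-centralized element), which is legitimate because $L_1$ itself contains $y$ so one can start the inductive/parity argument from degree $1$.
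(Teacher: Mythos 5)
Your proof is correct and uses the same two ingredients as the paper's own argument --- Lemma~\ref{lemma:uxyy} applied with $i=j$, and the consecutive-components form of Corollary~\ref{cor:y_centr} pivoting on whether $L_{j-1}$ contains a nonzero $y$-centralized element --- merely arranged contrapositively instead of by contradiction. The only blemish is your closing sentence: in the contrapositive framing there is nothing to contradict, since $[L_jyy]=0$ is precisely the conclusion you set out to derive.
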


\begin{proof}
Note that $j>2$.
If $L_{j-1}$ had a nonzero element centralized by $y$, then because $[L_jy]\neq 0$ by hypothesis,
Lemma~\ref{lemma:uxyy} would apply with $i=j$ and yield $[L_jyy]=0$, a contradiction.
Therefore, $L_{j-1}$ does not have any nonzero element centralized by $y$.
Then because of Corollary~\ref{cor:y_centr} each of $L_{j-2}$ and $L_{j}$ has such an element.
In particular, because $[L_jy]\neq 0$ we must have $\dim(L_j)=2$.
\end{proof}

Thus, the task of proving $[Lyy]=0$ in a thin Lie algebra $L$ has now been reduced to showing $[L_jyy]$ for the diamonds $L_j$.
We pause to note that $L$ having any two diamonds at distance two implies $[Lyy]\neq 0$.
In fact, if $L_j$ and $L_{j+2}$ are diamonds, in a thin Lie algebra $L$ with $\dim(L_3)=1$,
then the covering property easily implies $[L_jy]\neq 0$, whence $[L_jy]=L_{j+1}$
because $L_{j+1}$ is one-dimensional.
Consequently, $[L_jyy]=[L_{j+1}y]\neq 0$, because $[L_{j+1}y]$ and $[L_{j+1}y]$ span the diamond $L_{j+2}$.

The following remark describes one special thin Lie algebra with $\dim(L_3)=1$
having certain diamonds at distance two.

\begin{rem}\label{rem:diamond_deg_5}
The proof of~\cite[Theorem~2(a)]{CMNS} implies that in characteristic either zero or larger than five there is a unique (infinite-dimensional)
thin Lie algebra $L$ having second diamond $L_5$.
That result, which predates the study of thin Lie algebras {\em per se,}
only claims uniqueness for the graded Lie algebra $L$ associated to the lower central series
of a infinite thin pro-$p$ group with second diamond in weight $5$, for $p>5$.
However, its proof applies to any thin Lie algebra $L$ with second diamond $L_5$, for $p>5$ or $p=0$.
A construction for that thin Lie algebra $L$ was also given in~\cite{CMNS}, and that is valid in every characteristic except for characteristic two.
The diamonds of $L$ occur in each degree congruent to $\pm 1$ modulo $6$, and so as noted above one has
$[L_{i-1}yy]\neq 0$ for each $i$ multiple of six.
Uniqueness of $L$ fails in characteristic three because $5=2q-1$ with $q=3$, and in characteristic five because we may have $5=q$ then,
and so lots of other thin Lie algebras enter those cases (see Remark~\ref{rem:char_5}).
\end{rem}

Our next remark describes how in characteristic three one can produce uncountably many thin Lie algebras with $\dim(L_3)=1$
having diamonds at distance two.

\begin{rem}\label{rem:char_3}
According to~\cite[Section~9]{CMN}, for each power $q$ of the (positive) characteristic
there are uncountably many infinite-dimensional graded Lie algebras of maximal class $M$
with precisely two distinct two-step centralizers
and constituent sequence beginning with $2q,q,q$.
As shown in ~\cite[Section~5]{CaMa:thin},
each such $M$ has a maximal subalgebra $L$ which becomes thin under a new grading, and its diamonds start with $L_1,L_{2q-1},L_{3q-2},L_{4q-3}$.
In characteristic three and taking $q=3$ those diamonds are $L_1,L_5,L_7,L_9$.
Consequently, $[L_5yy]\neq 0$ and $[L_7yy]\neq 0$.
\end{rem}

The thin Lie algebras of the above remarks suggest that we might be able to obtain more information
on the earliest diamond $L_j$ with $[L_jyy]\neq 0$
than on an arbitrary one,
hence we will do that in preparation for a proof of Theorem~\ref{thm:sandwich_odd}.

\begin{lemma}\label{lemma:sandwich_plus}
Let $L$ be a thin Lie algebra with $\dim(L_3)=1$,
and let $y$ span $C_{L_1}(L_2)$.
Suppose $[Lyy]\neq 0$, and let $j$ be minimal such $[L_jyy]\neq 0$.
Then $L_{j-2}$ and $L_{j-3}$ are one-dimensional and centralized by $y$.
\end{lemma}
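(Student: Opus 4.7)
The plan is to first pin down the layout of the components near $L_j$ from the results already established, and then extract information about $L_{j-3}$ from a single application of the Jacobi identity of exactly the same shape as~\eqref{eq:sandwich}. By Lemma~\ref{lemma:sandwich}, the hypothesis $[L_jyy]\neq 0$ forces $\dim(L_j)=2$, so $L_j$ is a diamond. Applying Corollary~\ref{cor:vyy} gives $\dim(L_{j-1})=1$ and (as in its proof) a nonzero $u\in L_{j-2}$ with $[uy]=0$, such that $[ux]$ spans $L_{j-1}$ and $[uxy]\neq 0$ (the latter forced by the covering property, since $L_j$ is two-dimensional). If $L_{j-2}$ were itself a diamond, then $L_{j-2}$ and $L_j$ would form a pair of diamonds at distance two, and the observation recorded just after Lemma~\ref{lemma:sandwich} would yield $[L_{j-2}yy]\neq 0$, contradicting the minimality of $j$. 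Hence $\dim(L_{j-2})=1$ with $u$ as a spanning element, and $[L_{j-2}y]=0$.

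To handle $L_{j-3}$, I would apply the Jacobi manipulation used in~\eqref{eq:sandwich} to an arbitrary $a\in L_{j-3}$ in place of $u$. The identity $0=[a[xyy]]=[axyy]-2[ayxy]+[ayyx]$, combined with $[ayy]=0$ (which holds by the minimality of $j$), gives $[axyy]=2[ayxy]$. The left-hand side vanishes because $[ax]\in L_{j-2}$ and $[L_{j-2}y]=0$, so $2[ayxy]=0$. Since the hypothesis $[Lyy]\neq 0$ is vacuous in characteristic two by Theorem~\ref{thm:sandwich_even}, we may assume $p\neq 2$, and hence $[ayxy]=0$. Writing $[ay]=\delta u$ inside the one-dimensional space $L_{j-2}$, this reads $\delta[uxy]=0$, which forces $\delta=0$ because $[uxy]\neq 0$. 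Therefore $[L_{j-3}y]=0$. The one-dimensionality of $L_{j-3}$ then follows from the covering property: any nonzero $z\in L_{j-3}$ has $[zL_1]=L_{j-2}\neq 0$, and combined with $[zy]=0$ this forces $[zx]\neq 0$, so $\ad x$ embeds $L_{j-3}$ into the one-dimensional $L_{j-2}$.

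The only nontrivial step is the Jacobi computation. The information available about $L_{j-3}$ is entirely in terms of $(\ad y)^2$, whereas we must extract the stronger statement that $\ad y$ itself vanishes on $L_{j-3}$; the identity $[axyy]=2[ayxy]$ achieves exactly this by trading one $\ad y$ against the one hidden inside $[xyy]=0$. This is also precisely why characteristic two must be excluded, and why the derivation-based proof of Theorem~\ref{thm:sandwich_even} cannot simply be recycled in odd characteristic.
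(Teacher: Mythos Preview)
Your proof is correct and follows essentially the same route as the paper's: the heart of both arguments is the Jacobi expansion $0=[a[xyy]]=[axyy]-2[ayxy]+[ayyx]$ applied to an arbitrary $a\in L_{j-3}$, together with the exclusion of characteristic two via Theorem~\ref{thm:sandwich_even}. The only difference is the order of business: you first settle $L_{j-2}$ by invoking the observation on diamonds at distance two (so that $[axy]=0$ is already available when you expand), whereas the paper runs the Jacobi computation first---using minimality of $j$ directly to get $[txyy]=0$---and only afterwards reads off that $L_{j-2}$ is one-dimensional and centralized by $y$.
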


\begin{proof}
According to Lemma~\ref{lemma:sandwich} we have $\dim(L_j)=2$,
and hence $\dim(L_{j-1})=1$ because of Corollary~\ref{cor:vyy}.
Note that $j>3$.
Let $t$ be any nonzero element of $L_{j-3}$.
Then $[tyy]=0$ and $[txyy]=0$ by minimality of $j$, and hence
\[
0=[t[xyy]]=[txyy]-2[tyxy]+[tyyx]=-2[tyxy].
\]
Because of Theorem~\ref{thm:sandwich_even} we are not in characteristic two, and so we conclude $[tyxy]=0$.
But then $[tyx]=0$, because $L_{j-1}$ contains no nonzero element centralized by $y$.
The covering property now implies $[ty]=0$, otherwise
$[tyx]$ and $[tyy]$, which both vanish, would have to span $L_{j-1}$.
Because $t$ was an arbitrary nonzero element of $L_{j-3}$,
Corollary~\ref{cor:vyy} implies $\dim(L_{j-3})=1$.
Finally, $L_{j-2}$ has dimension one because it is spanned by $[tx]$.
But we know that $L_{j-2}$ contains a nonzero element centralized by $y$, and hence $[L_{j-2}y]=0$.
\end{proof}

We are ready to prove the harder analogue of Theorem~\ref{thm:sandwich_even} for odd characteristics, thus completing a proof of Theorem~\ref{thm:sandwich}.
As our discussion leading to Remark~\ref{rem:diamond_deg_5} shows, we need an additional assumption to ensure that
no diamonds occur at distance two.
As it turns out, assuming $\dim(L_5)=1$ will do, which is another way of asking that the second diamond of $L$ occurs past $L_5$.

\begin{theorem}\label{thm:sandwich_odd}
Let $L$ be a thin Lie algebra  of odd characteristic, with
$\dim(L_3)=\dim(L_5)=1$, and
let $y$ span $C_{L_1}(L_2)$.
Then $[Lyy]=0$.
\end{theorem}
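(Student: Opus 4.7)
The plan is a proof by contradiction. Assume $[Lyy]\neq 0$ and let $j$ be minimal with $[L_jyy]\neq 0$. By Lemma~\ref{lemma:sandwich} together with Lemma~\ref{lemma:sandwich_plus}, $L_j$ is a diamond, the three preceding components $L_{j-1}$, $L_{j-2}$, $L_{j-3}$ are all one-dimensional, and $L_{j-2}$ and $L_{j-3}$ are centralized by $y$. I first rule out the small values of $j$: the cases $j=3$ and $j=5$ contradict the hypotheses $\dim(L_3)=1$ and $\dim(L_5)=1$ since $L_j$ must be a diamond; and $j=4$ would force $L_{j-3}=L_1$ to be one-dimensional by Lemma~\ref{lemma:sandwich_plus}, against $\dim(L_1)=2$. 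So $j\geq 6$.

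Next I would make Lemma~\ref{lemma:sandwich_plus} explicit near $L_j$. Let $u$ span $L_{j-3}$ with $[uy]=0$; then $[ux]$, $[uxx]$, and the pair $[uxxx],[uxxy]$ span $L_{j-2}$, $L_{j-1}$, and the diamond $L_j$ respectively, with $[uxy]=0$. Using the non-derivation identity
\[
(\ad y)^2[a,b]=[(\ad y)^2a,b]+2[(\ad y)a,(\ad y)b]+[a,(\ad y)^2b]
\]
together with $[xyy]=0$ and $[uy]=0$, a direct calculation gives $[uxxyy]=0$ (so $[uxxy]$ is a $y$-centralized element of $L_j$) and
\[
[uxxxyy]=2\bigl[[uxxy],[xy]\bigr].
\]
Hence the assumption $[L_jyy]\neq 0$ is equivalent to $\bigl[[uxxy],[xy]\bigr]\neq 0$.

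The crux is to derive a contradiction by showing $\bigl[[uxxy],[xy]\bigr]=0$, and this is where the hypothesis $\dim(L_5)=1$ is used. The low-degree analogue $[xyxxy]=\bigl[[xyx],[xy]\bigr]$ belongs to $L_5$, and the same identity above applied at the bottom of $L$ (using $[L_2,y]=[L_3,y]=0$) shows that this element is $y$-centralized. Since $\dim(L_5)=1$, either $[xyxxy]=0$ (which gives $[L_4,y]=0$) or $[xyxxy]\neq 0$ spans $L_5$ and forces $L_5$ itself to be entirely $y$-centralized; in either case $[L_5,y,y]=0$, and the mechanism of Lemma~\ref{lemma:sandwich_plus} can then be iterated to propagate $y$-centralization up through all of $L_3,L_4,L_5,\dotsc,L_{j-2}$. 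Once this long chain of $y$-centralized one-dimensional components is in place, repeated Jacobi expansion rewrites $\bigl[[uxxy],[xy]\bigr]$ as a bracket of elements all drawn from $y$-centralized components, and therefore as zero, contradicting the reduction above.

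The main obstacle is executing this propagation cleanly. Lemma~\ref{lemma:sandwich_plus} on its own controls only $L_{j-2}$ and $L_{j-3}$, so extending the $y$-centralized segment back to $L_5$ requires a careful iterated use of the displayed identity together with the hypothesis $\dim(L_5)=1$; converting that chain into the final vanishing of $\bigl[[uxxy],[xy]\bigr]$ is then a matter of bookkeeping with Jacobi identities. The decisiveness of the hypothesis $\dim(L_5)=1$ is witnessed by Remark~\ref{rem:char_3}, where its failure yields thin Lie algebras in which $[Lyy]\neq 0$.
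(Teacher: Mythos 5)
Your opening reduction is correct and is essentially the paper's own: with $u$ spanning $L_{j-3}$, Lemma~\ref{lemma:sandwich_plus} gives $[uy]=[uxy]=0$, the element $v=[uxx]$ spans $L_{j-1}$, $[vyy]=0$, and expanding $0=[v[xyy]]$ (equivalently, your square-of-$\ad y$ identity) shows that $[L_jyy]\neq 0$ amounts to $[vxyy]=2\bigl[[vy],[xy]\bigr]\neq 0$. The gap is everything after that. The proposed propagation of $y$-centralization ``through all of $L_3,L_4,\dots,L_{j-2}$'' is not available: minimality of $j$ controls $(\ad y)^2$ on the components below $L_j$, not $\ad y$, and it is false in general that every such component is centralized by $y$. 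Any diamond $L_m$ with $5<m<j$ is perfectly compatible with your situation (for instance the second diamond itself, in degree $q$ or $2q-1$), and then $L_{m-1}$ contains no nonzero element centralized by $y$ (this is the first line of the proof of Corollary~\ref{cor:vyy}), while $L_m$ contains elements not killed by $y$; moreover nothing in Lemma~\ref{lemma:sandwich_plus} ``iterates'': it only concerns $L_{j-2}$ and $L_{j-3}$. Worse, the closing criterion is a non sequitur: a bracket of two $y$-centralized elements need not vanish, and in fact $[uxxy]$ and $[xy]$ are themselves both centralized by $y$ (you proved $[uxxyy]=0$, and $[xyy]=0$), so if ``a bracket of elements drawn from $y$-centralized components is zero'' were a valid principle, your proof would already be finished at the end of your second paragraph, with no use of $\dim(L_5)=1$ or of the characteristic --- contradicting the genuine counterexamples recorded in Remarks~\ref{rem:diamond_deg_5} and~\ref{rem:char_3}.

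What is missing is the actual lever by which $\dim(L_5)=1$ enters. The paper first proves the degree-four relation $[yxxxy]=0$, i.e.\ $[L_4y]=0$: if $[yxxxy]\neq 0$ it spans $L_5$, then $0=[yxx[xyy]]=[yxxxyy]$ gives $[L_5y]=0$, and the generalized Jacobi identity $0=[y[yxxxx]]=-4[yxxxyx]$ kills the spanning element of $L_6$ in odd characteristic, a contradiction. This relation is then transported to degree $j$: expanding $0=[u[yxxxy]]$ using $[uy]=[uxy]=0$ yields a second linear relation between $[vxyy]$ and $[vyxy]$, namely $[vxyy]=3[vyxy]$, which combined with $[vxyy]=2[vyxy]$ (your relation, since $\bigl[[vy],[xy]\bigr]=[vyxy]$) forces $[vxyy]=[vyxy]=0$, hence $[L_jyy]=0$ because $[vx]$ and $[vy]$ span $L_j$. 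Your argument produces only the first of these two relations; without a second, independent one --- obtained from the low-degree identity $[yxxxy]=0$, which you never establish --- the contradiction is not reached.
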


\begin{proof}
Besides the relation $[yxy]=0$, which serves to define $y$ up to a scalar, in $L$ we also have $[yxxy]=0$ and $[yxxxy]=0$.
In fact, the former assertion follows from $0=[yx[yx]]=[yxyx]-[yxxy]=[yxxy]$,
and hence $[yxxx]$ spans $L_4$.
To prove the latter, assuming $[yxxxy]\neq 0$ for a contradiction, it will span $L_5$ because of our hypothesis $\dim(L_5)=1$.
Because $0=[yxx[xyy]]=[yxxxyy]$ we then have $[L_5y]=0$, and hence $[yxxxyx]$ spans $L_6$.
Now the generalized Jacobi identity yields the contradiction
$0=[y[yxxxx]]=-4[yxxxyx]$.

Now suppose for a contradiction that $[Lyy]\neq 0$, and let $j$ be minimal such that $[L_jyy]\neq 0$.
We will use the relations $[yxxy]=0$ and $[yxxxy]=0$ to derive a contradiction.
Let $t$ be a nonzero element of $L_{j-3}$.
According to Lemma~\ref{lemma:sandwich} we have $[ty]=0$ and $[txy]=0$,
and $L_{j-1}$ is spanned by $v:=[txx]$.
Furthermore, $\dim(L_j)=2$, and hence $[vyy]=0$ according to Corollary~\ref{cor:vyy}.
The calculations
\[
0=[v[xyy]]=[vxyy]-2[vyxy],
\]
and
\[
0=[t[yxxxy]]=[t[yxxx]y]=-3[txxyxy]+[txxxyy]=[vxyy]-3[vyxy],
\]
taken together imply $[vxyy]=[vyxy]=0$.
Because $[vx]$ and $[vy]$ span $L_j$ we obtain $[L_jyy]=0$,
which gives the desired contradiction.
\end{proof}

We discuss the extent to which hypothesis $\dim(L_5)=1$ of Theorem~\ref{thm:sandwich_odd} is necessary.
As we recalled in Remark~\ref{rem:diamond_deg_5},
over a field of characteristic $p>5$ that hypothesis serves to exclude a single exception,
given by the unique thin Lie algebra with second diamond $L_5$, identified in~\cite[Theorem~2(a)]{CMNS} and described in~Remark~\ref{rem:diamond_deg_5}.
However, if we omit the hypothesis $\dim(L_5)=1$ of Theorem~\ref{thm:sandwich_odd} in characteristic $p=3$,
its conclusion $[Lyy]=0$ is violated by uncountably many thin Lie algebras with second diamond $L_5$ and third diamond $L_7$,
which we described in Remark~\ref{rem:char_3}.
The following remark focuses on the borderline case $p=5$.

\begin{rem}\label{rem:char_5}
We discuss how far Theorem~\ref{thm:sandwich_odd} may be extended to include
thin Lie algebras of characteristic five, with second diamond $L_5$ and an additional assumption.
Countably many thin Lie algebras with second diamond $L_q$ were constructed in~\cite{AviMat:A-Z},
for any power $q$ of the odd characteristic.
In particular, when $q=5$ this gives us a countable family of thin Lie algebras with second diamond $L_5$.
%In particular, when $q=5$ those provide infinitely many exceptions to removing the hypothesis $\dim(L_5)=1$ from Theorem~\ref{thm:sandwich_odd}.
However, all those thin Lie algebras, which have third diamond $L_{2q-1}=L_9$
(possibly {\em fake,} see~\cite{AviMat:A-Z} for what that means), do satisfy the conclusion $[Lyy]=0$ of Theorem~\ref{thm:sandwich_odd}.

In fact, one can prove that $[Lyy]=0$ holds for any thin Lie algebra $L$ of characteristic five satisfying
$\dim(L_3)=1$, $\dim(L_5)=2$, and $[L_7y]=0$.
(Here the hypothesis $[L_7y]=0$ implies that neither $L_7$ nor $L_8$ is a diamond, and hence the third diamond of $L$ does not occur earlier than $L_9$.)
A proof of this fact follows the general inductive strategy employed in~\cite{AviMat:Nottingham_structure},
but is not a formal consequence of~\cite{AviMat:Nottingham_structure},
where some generality was sacrificed in favour of simpler exposition.
In fact, part of that simplification in~\cite{AviMat:Nottingham_structure} relies on using our Theorem~\ref{thm:sandwich_odd}.
\end{rem}

\section{Thin loop algebras and the sandwich element}\label{sec:loop}
We conclude this paper with a discussion of the significance for thin Lie algebras of an important property of sandwich elements.
Classical simple Lie algebras
do not have sandwich elements, see~\cite[p.~124]{Sel}.
By contrast, confirming a conjecture of Kostrikin, Premet proved in~\cite{Premet:degeneration} that
every finite-dimensional simple Lie algebra, over an algebraically closed field of characteristic $p>5$,
which is not classical, must have sandwich elements (that is, have {\em strong degeneration}).

The connection of this characterization of classical Lie algebras with (infinite-dimensional) thin Lie algebras
comes from the fact that several of the latter have been constructed as {\em loop algebras}
of certain finite-dimensional simple Lie algebras, or close to simple.
In the simplest setting, one starts from a finite-dimensional simple Lie algebra $S$ over a field $F$,
with a cyclic grading $S=\bigoplus_{k\in\Z/N\Z} S_k$, and considers the Lie algebra $S\otimes F[t]$ over $F$, where $t$ is an indeterminate.
Its subalgebra
$L=\bigoplus_{k>0}S_{\bar k}\otimes t^k$,
where $\bar k=k+N\Z$,
is naturally graded over the positive integers, and is called a {\em loop algebra} in this context.
In certain cases one needs a slightly more general construction involving also a derivation of $S$
(such as that of~\cite[Definition~2.1]{AviMat:A-Z}),
but that is inconsequential for our present observation.

It was proved in~\cite{CMNS} that a thin Lie algebra $L$ (of characteristic not $2$ or $3$)
having second diamond $L_3$ and $\dim(L_4)=1$ belongs to one of (up to) two isomorphism types,
and a thin Lie algebra $L$ (of characteristic not $2$, $3$ or $5$) with second diamond $L_5$ is unique up to isomorphism
(see also Remark~\ref{rem:diamond_deg_5}).
Each of those Lie algebras can be realized as a loop algebra of a classical simple Lie algebra of type $A_1$ or $A_2$.

Now, the fact that those thin Lie algebras of~\cite{CMNS} are loop algebras of classical simple Lie algebras implies
that there cannot be any nonzero element $y\in L_{\bar 1}$ with $(\ad y)^2=0$, because such $y$ would have the form
$y=c\otimes t$ for some sandwich element $c$ of $S$, which cannot exist.
For the same reason, if a thin Lie algebra $L$ (say of characteristic $p>5$) with second diamond past $L_5$ is a loop algebra of a simple Lie algebra $S$,
then $S$ cannot be classical, because $L$ has a sandwich element $y$ according to Theorem~\ref{thm:sandwich_odd}, and hence so does $S$.

In fact, all constructions of thin Lie algebras with second diamond $q$ or $2q-1$ as loop algebras which were given
in several papers~\cite{Car:Nottingham,AviMat:-1,CaMa:Hamiltonian,AviMat:A-Z,AviMat:mixed_types}
start from non-classical simple Lie algebras of various types.
The occurrence of non-classical algebras in those constructions is hardly surprising,
because the expected structure of such thin Lie algebras $L$ given by corresponding uniqueness results
shows that the dimension of $S$ ought to be a power of $p$ or one or two less in the various cases.
However, the sandwich element $y$ does provide guidance in identifying the appropriate cyclic grading of $S$ employed in those constructions.

%----------------------------------------------------------------------------
\bibliography{References}

\end{document}